\newtheorem{theorem}{Theorem}
\newtheorem{proposition}[theorem]{Proposition}
\newlength{\saveparindent}
\def\bproof{\begin{rm}\protect\vspace{5pt}\noindent\textbf{Proof: }%
\addtolength{\parskip}{4pt}\setlength{\parindent}{0pt}}
\def\eproof{\end{rm}\addtolength{\parskip}{-4pt}%
\setlength{\parindent}{\saveparindent}}
\newcommand{\bprooff}[1]{\begin{rm}\protect\vspace{5pt}%
\noindent\textbf{Proof of #1: }\addtolength{\parskip}{4pt}%
\setlength{\parindent}{0pt}}
\newenvironment{proof}{\par\bproof}{\eproof\(\qed\) \par\medskip}
\newcommand{\qed}{\quad\mbox{\rule{7pt}{7pt}}}
\newcommand{\calC}{\mathcal{C}}
\newcommand{\mi}{\texttt{-}}
\newcommand{\pl}{\texttt{+}}
\def\ifpdf\input{#.pdf_t}\else\input{#.pstex_t}\fi1{\ifpdf\input{#1.pdf_t}\else\input{#1.pstex_t}\fi}
\title{Non-degenerated groundstates in the antiferromagnetic Ising model on triangulations}
\author{
  Andrea~Jim\'enez\thanks{
  Departamento de Ingenier\'ia Matem\'{a}tica, Universidad de Chile \&
  Department of Applied Mathematics, Charles University.
  Web: \texttt{www.dim.uchile.cl/$\sim$ajimenez}.
  Gratefully acknowledges the support of Mecesup via UCH0607 Project,
    CONICYT and the partial support of the Czech Research Grant MSM 0021620838.}}
\begin{document}

\maketitle
\begin{abstract}
A triangulation is an embedding of a graph into 
  a closed Riemann surface so that each face boundary 
  is a $3$-cycle of the graph.
In this work, groundstate degeneracy in the 
  antiferromagnetic Ising model on triangulations is studied.
We show that for every fixed closed Riemann surface $\Omega$, 
  there are vertex-increasing sequences of triangulations of $\Omega$
  with a non-degenerated groundstate. In particular, we exhibit 
  geometrically frustrated systems with a non-degenerated groundstate.
\end{abstract}

\section{Introduction}\label{sec1} 
The Ising model is one of the most studied models of interacting particles  
  in statistical physics. This model has been strongly linked to the
  study of discrete mathematics~\cite{loebl09,GalLoe}. 
In this sense, tools and techniques developed in 
  the discrete setting have shown to be very useful to deal with the solution of problems 
  related to the Ising model and vice versa.

Typically, to study the Ising model, particles 
  are located at the vertices of a graph and the type of interaction between them
  is determined by the existence and weight of edges in the graph.
In this notes, we explore the Ising model where
  particles and their interaction describe triangulations
  of closed Riemann surfaces with edge-weight equal to $\mi 1$.

A \emph{triangulation} of a closed Riemann surface $\Omega$,
  or simply a triangulation, is an embedding of a graph 
  in $\Omega$ so that each face boundary is a $3$-cycle of the graph.
Throughout this work, the closed Riemann surface $\Omega$ will be specified	
  just in needed cases.

Let's introduce the main ingredients of the Ising model.
  Given a triangulation $T$, a \emph{state} of the Ising model 
  on $T$ is a function $\sigma$ that assigns to each 
  vertex of $T$ a value from the set $\{\pl1, \mi1\}$. 
  In other words, $\sigma \in \{\pl 1, \mi 1\}^{|V(T)|}$,
  where $V(T)$ denotes the set of vertices of $T$. 
Values $\pl1$ and $\mi1$ are usually called \emph{spins}. 
  Then, we also say that a state on $T$ is a \emph{spin-assignment} on $T$.
For each state $\sigma$, the \emph{energy} or \emph{Hamiltonian} of the 
  Ising model on a triangulation $T$ is defined by $
H(\sigma) = \mi \sum_ {uv \, \in \, E(T)} J_{uv} \sigma_{u} \sigma_ {v},
$ where $E(T)$ denotes the set of edges of $T$ and for each $uv$ in $E(T)$ 
  the parameter $J_{uv}$ is called \emph{coupling constant}.
The main purpose of a coupling constant $J_{uv}$ is to specify the type 
  of interaction between vertices $u$ and $v$. In general, this constant 
  may vary from positive to negative depending on the characteristics 
  of the system to be studied. It can also be randomly chosen, like in 
  the case of spin-glasses.

The \emph{antiferromagnetic} variant of the Ising model takes coupling constant
  equal to~$\mi1$ for all edges of the graph. Then, given a triangulation $T$
  and a state $\sigma$, the energy of $\sigma$ in the antiferromagnetic
  Ising model is given by 
\begin{equation} \label{eq:energy}
H(\sigma) =  \sum_ {uv \, \in \, E(T)} \sigma_{u} \sigma_ {v}.
\end{equation}

Many mathematical problems naturally arise from the Ising model. One of them
  is the study of states that provide the minimum possible energy for the system. 
Those states are usually known as \emph{groundstates}. 
  Related to the study of groundstates, is the \emph{groundstate degeneracy}, 
  which by definition corresponds to the 
  number of different groundstates that a system supports. If the 
  groundstate degeneracy of a system is greater than two (i.e. more
  than one pair of groundstates exist), it is said that 
  the groundstate is \emph{degenerated}.
  Otherwise, it is called \emph{non-degenerated}.
The groundstate degeneracy has been vastly studied~\cite{LV,PhysRevB.16.4630}, 
  being of great physical interest,
  because (among others) it determines the entropy of the system, 
  and characterizing entropy's behaviour helps to 
  understand physical phenomena associated to order and stability
  of the system~\cite{citeulike:1289400}. 

Let $T$ be a triangulation and $\sigma \in \{\pl1, \mi1\}^{|V (T)|}$ be a state
  of the antiferromagnetic Ising model on $T$. 
It is said that $uv \in E(T)$ is frustrated by $\sigma$ or that
  $\sigma$ frustrates $uv \in E(T)$ if $\sigma_{u} = \sigma_{v}$. 
Observe that each state on $T$ frustrates
  at least one edge of each face boundary of the triangulation since
  each face boundary is $3$-cycle.

This feature (every state frustrates at least one edge of each face boundary) is known as
  geometrical frustration. The understanding of order and stability of 
  geometrically frustrated systems, is one of the main 
  questions that condensed matter physicists face to explain.
It is expected that systems which exhibit geometrical
  frustration lead to highly degenerated groundstates 
  with a non-zero entropy at zero temperature~(see~\cite{2006PhT59b24M}). 
  In other words, groundstate degeneracy
  in a geometrically frustrated system is typically exponentially 
  large as a function of 
  the number of vertices of the underlying graph.
Indeed, groundstate degeneracy of any plane triangulation 
  is exponential in the number of vertices, since
   groundstate degeneracy of plane 
  triangulations is twice the number of perfect matchings 
  of cubic bridgeless planar graphs~(see~\cite{EKKKN,JKL}). 

Surprisingly, in this work it is shown that there are triangulations
   of closed Riemann surfaces with an arbitrary number 
   of vertices and a non-degenerated antiferromagnetic groundstate.
More precisely, we establish the next result.

\begin{theorem}\label{theo:onesat}
Let $\Omega$ be a fixed closed Riemann surface with positive genus ($g>0$). 
  Then, for every $n>0$ there is a triangulation 
  $T$ of $\Omega$ with $n \leq |V(T)|$ so that $T$ has a 
  non-degenerated antiferromagnetic groundstate.
\end{theorem}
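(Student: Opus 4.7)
My first step would be to reformulate the groundstate problem as MaxCut. Writing $V_{+}=\{v:\sigma_{v}=+1\}$ and $V_{-}=V(T)\setminus V_{+}$, the Hamiltonian satisfies $H(\sigma)=|E(T)|-2|\delta(V_{+},V_{-})|$, where $\delta(V_{+},V_{-})$ denotes the cut induced by the bipartition. Hence groundstates correspond exactly to maximum cuts of $T$, and the symmetry $\sigma\leftrightarrow-\sigma$ is just the swapping of the two sides of the partition. Non-degeneracy of the groundstate is therefore equivalent to uniqueness of the MaxCut partition of $T$. The theorem thus reduces to the following statement: for every $\Omega$ of genus $g>0$ and every $n>0$, produce a triangulation of $\Omega$ with at least $n$ vertices and a unique MaxCut partition.

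My strategy is to exhibit a small base triangulation with a unique MaxCut and then enlarge it by a vertex-adding operation that preserves uniqueness. The natural candidate is stellar subdivision of a face $abc$: add a new vertex $d$ joined to $a$, $b$, $c$, replacing the face $abc$ with the three faces $abd$, $bcd$, $cad$. Assume $T$ has a unique MaxCut $(V_{+},V_{-})$ of value $k$ in which $\{a,b,c\}$ is not monochromatic, say $a,b\in V_{+}$ and $c\in V_{-}$. Placing $d\in V_{-}$ in the new triangulation $T'$ yields a cut of value $k+2$. On the other hand, any partition of $V(T')$ whose restriction to $V(T)$ differs from $(V_{+},V_{-})$ has cut value at most $k-1$ in $T$ and gains at most $3$ new cut edges at $d$, giving a total of at most $k+2$; equality forces the restricted partition to achieve exactly $k-1$ in $T$ and to make $\{a,b,c\}$ monochromatic. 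Hence uniqueness of the MaxCut propagates to $T'$ provided the MaxCut gap of $T$ is at least $2$, or provided the subdivision face is chosen so that no near-MaxCut competitor monochromatises it. Iterating this operation, choosing the face carefully at each step to preserve both uniqueness and a sufficient gap, yields triangulations of $\Omega$ with arbitrarily many vertices and a unique MaxCut.

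The main obstacle is the base construction: producing, for each positive genus $g$, a triangulation of a genus-$g$ surface with a unique MaxCut and a gap of at least $2$. Positive genus is essential here, since any planar triangulation has exponentially many MaxCut partitions (corresponding to perfect matchings of its bridgeless cubic planar dual, as recalled in the introduction). For $g=1$ I would look for a small explicit torus triangulation, verifying uniqueness by enumerating the $2^{|V|-1}$ partitions of a carefully chosen, symmetry-broken example. For $g>1$ I would attach $g-1$ further handles to the $g=1$ base, each handle being a rigid triangulated cylinder whose interior spin configuration in any MaxCut is forced by the boundary identification, thereby reducing uniqueness in higher genus to uniqueness on the torus. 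The delicate part is the uniqueness proof for the base: the argument should exploit a non-contractible cycle $C$ of $\Omega$ and use the fact that a cut must meet $C$ in an even number of edges, combining this topological parity constraint with the constraint that each triangular face contains $0$ or $2$ cut edges to force a unique partition.
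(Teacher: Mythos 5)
Your overall strategy (small base triangulation plus a vertex-adding operation that preserves uniqueness, using positive genus to escape the planar exponential degeneracy) is the same as the paper's, and the MaxCut reformulation is a correct, equivalent way of saying ``groundstates are satisfying spin-assignments, i.e.\ exactly one frustrated edge per face.'' However, the central iteration step as you state it has a genuine gap. You propose to iterate stellar subdivision while ``preserving both uniqueness and a sufficient gap,'' but the gap \emph{cannot} be preserved above $1$: after subdividing a face $abc$ with $a,b\in V_+$, $c\in V_-$ and placing $d\in V_-$ to get cut value $k+2$, the competing cut obtained by keeping $P^*$ on $V(T)$ and putting $d$ on the wrong side has value $k+1$, so the gap of $T'$ is exactly $1$. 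Thus from the second step on you are forced into your fallback condition (``choose a face that no near-MaxCut competitor monochromatises''), and this is not established. Concretely, the competitor just described monochromatises the new face $abd$, so you must avoid $abd$; you would then have to track \emph{all} cuts of $T'$ of value $k+1$ (including ones that restrict to cuts of $T$ of value $k-2$ monochromatising $abc$) and show that one of $bcd$, $cad$, or some other face, is safe, and carry this bookkeeping through arbitrarily many iterations. Nothing in your sketch guarantees such a face always exists; indeed, on a closed triangulation, uniqueness of the satisfying assignment on $T$ does \emph{not} in general survive subdivision of an arbitrary face, because a subdivision can create a new satisfying assignment whose restriction monochromatises the subdivided face.

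The paper avoids exactly this difficulty by reorganising the construction. Rather than iteratively subdividing a closed triangulation (where one must re-examine the global set of near-optimal competitors after each step), it first removes a carefully chosen set of faces $\tilde F$ from a fixed small toroidal triangulation $T$, and verifies once and for all, by a short case analysis, that the resulting \emph{punctured} triangulation $\tilde T$ still has a unique pair of satisfying assignments. That verification is precisely the condition ruling out the near-optimal cuts that monochromatise the removed faces. Then, into each hole it glues a plane ``stack'' triangulation $\Delta_n$ (which is, incidentally, the result of iterated stellar subdivision always along a fixed chain $x,y_0,\dots,y_n,z$), and proves by a one-line induction that $\Delta_n$ has a unique satisfying assignment once the boundary triangle is prescribed with the right monochromatic edge. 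Since the boundary of $\Delta_n$ is held rigid by $\tilde T$, the induction is local and there is no accumulating set of competitors to track. Finally, for genus $g>1$ the paper does not attach cylinders as you suggest but chains together $g$ ``connectors'' (punctured tori with two disjoint boundary triangles) by identifying boundary triangles fundamental-edge to fundamental-edge, a step that likewise preserves uniqueness by the same rigidity argument. In short: your high-level plan matches the paper, but the subdivision iteration as written does not go through, you have not supplied the base triangulation with the claimed unique MaxCut, and the genus-raising step is only gestured at; the paper's removal-then-glue architecture is what actually makes the induction close.
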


In particular, when the closed Riemann surface $\Omega$ is a torus,
we have the following.

\begin{theorem}\label{theo:onesat-toro}
For every $n>0$ there is a toroidal triangulation 
  $T$ with $n \leq |V(T)|$ so that $T$ has a non-degenerated 
  antiferromagnetic groundstate. 
\end{theorem}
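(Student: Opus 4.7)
The plan is to recast the uniqueness of the groundstate as a uniqueness statement for certain perfect matchings of the dual graph $T^{\ast}$, and then to exhibit an arbitrarily large toroidal family satisfying this matching condition.

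First I would establish the reformulation. Every triangular face is frustrated on an odd number ($1$ or $3$) of its three edges, and every edge lies in exactly two faces; a double count gives $2\,|\mathrm{Fr}(\sigma)|\ge |F(T)|$, with equality if and only if each face has exactly one frustrated edge. Hence at any groundstate of minimum frustration (when this minimum is attainable) the set $\mathrm{Fr}(\sigma)$ is a perfect matching of $T^{\ast}$. Conversely, a perfect matching $M$ of $T^{\ast}$ arises from a spin assignment if and only if its complement is bipartite, equivalently if and only if $|M\cap C|\equiv |C|\pmod 2$ on every cycle $C$ of $T$. Using $\mathbb{Z}/2$ cycle/cocycle duality on the torus, this reduces to checking parity on the two generators of $H_{1}(\Omega;\mathbb{Z}/2)$. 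Therefore a toroidal triangulation has a non-degenerated groundstate precisely when there is a unique perfect matching of $T^{\ast}$ satisfying both homological parity conditions.

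Second, I would construct an explicit base toroidal triangulation $T_{0}$ whose dual has exactly one such matching. A natural candidate is a thin triangulated torus obtained from an $m\times 2$ grid of squares, each split by a diagonal, with opposite sides identified. For small $m$ the dual is a cubic graph whose perfect matchings can be enumerated directly, and one verifies by hand that exactly one of them satisfies the two homological parity conditions; this unique matching, together with its spin inversion, gives exactly two groundstates.

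Third, to produce toroidal triangulations with arbitrarily many vertices I would design a local enlargement operation that preserves uniqueness. The plan is to replace a designated triangle of $T_{0}$ by a triangulated planar disc gadget whose boundary spins are pinned by the unique groundstate of $T_{0}$. Because the gadget is planar and contractible, neither of the two homological parity conditions is affected; provided the gadget is chosen so that its dual admits a unique perfect matching compatible with the pinned boundary frustration pattern, no new admissible matching is introduced. Iterating, or taking gadgets of growing size, yields a vertex-increasing sequence of toroidal triangulations each with a non-degenerated groundstate.

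The main obstacle is the last step. Generic plane triangulated discs support exponentially many perfect matchings of their dual, so boundary spins do not a priori force the interior; producing planar patches whose internal frustration pattern is truly pinned—while the patch remains a simple triangulation on arbitrarily many vertices and interacts cleanly with the two homological constraints of $T_{0}$—is the technical heart of the argument. A promising route is to take strip-like gadgets whose dual is essentially a path, which has a unique perfect matching when its length is appropriate; verifying simplicity of the resulting triangulation and compatibility with the matching already fixed on $T_{0}^{\ast}$ is where most of the case analysis will live.
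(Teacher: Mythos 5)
Your high-level strategy --- a small base toroidal triangulation with a unique groundstate, into which arbitrarily large planar gadgets are inserted so as to preserve uniqueness --- is the same as the paper's. The matching/homological reformulation of the groundstate condition is a correct restatement of the paper's satisfying-spin-assignment language, and your strip gadgets with path-like duals would serve the same role as the paper's stack triangulations $\Delta_n$.

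There is, however, a genuine gap in your third step: the claim that the gadget's boundary spins are ``pinned by the unique groundstate of $T_0$.'' Once the designated triangle $f$ is removed to make room for the gadget, the punctured triangulation $T_0\setminus f$ has one fewer non-monochromaticity constraint than $T_0$, and may therefore admit satisfying spin-assignments that $T_0$ did not --- in particular assignments that make the hole boundary $\partial f$ entirely monochromatic. Such an assignment can be extended into the gadget (e.g.\ a single interior vertex of the opposite spin already gives a valid extension of an all-$\pl$ boundary), producing extra groundstates of the glued triangulation $\mathcal{T}$ that your condition on matchings of the gadget's dual does not rule out, since those matchings are only compared against the \emph{one} boundary frustration pattern you assumed. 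You must therefore additionally require, and verify by hand, that $T_0\setminus f$ still has exactly one pair of satisfying spin-assignments and that $\partial f$ is non-monochromatic under it; this is precisely the paper's ``supporting triangulation / removable set of faces'' condition, checked explicitly in Proposition~\ref{prop:supptriang} on a minimal toroidal triangulation. Relatedly, your assessment that the technical heart is the gadget construction is somewhat misplaced: the paper's short induction on stack triangulations disposes of the gadget, and the real case analysis lives in establishing this removability property of the base.
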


\section{Non-degenerated groundstates in triangulations} \label{sec:toroidal}

\subsection{Preliminaries} \label{sec:preliminaries}

Throughout this work, it will be needed to consider triangulations of 
  closed Riemann surfaces with some removed faces (with holes) 
  so that each hole is circumscribed by a $3$-cycle. 
Triangulations of this type will be called punctured triangulations~(see 
  for example~Figure~\ref{fig:twoholess}; triangles in grey depict holes).   
In general, every term defined for triangulations is naturally adapted to
  punctured triangulations. However, there are some 
  facts that hold only for triangulations; 
  they will be properly specified. We now introduce definitions for both 
  triangulations and punctured triangulations. 

Let $T$ be a (punctured) triangulation.
Recall that every spin-assignment on $T$  
  frustrates at least one edge of each face boundary of~$T$.
Notice that a spin-assignment is a groundstate if 
  it has the smallest possible number of frustrated edges 
  (see equation~\ref{eq:energy}). 
A spin-assignment $\sigma$ on $T$ is said to be \emph{satisfying} 
  if $\sigma$ frustrates exactly one edge of each face boundary of $T$.
 
Let $T$ be a triangulation.
Obviously any satisfying spin-assignment on $T$ is a groundstate.
  The converse is true for plane triangulations~\cite{JKL}.
Nevertheless, the equivalence does not hold in general because
  a satisfying spin-assignment doesn't need to exist.
However, note that when satisfying spin-assignments exist, 
  then every groundstate corresponds to a satisfying spin-assignment.
The situation is more complicated if $T$ is a punctured triangulation, 
  since distinct satisfying  spin-assignments on $T$ may 
  provide distinct antiferromagnetic energy. In Figure~\ref{punctured}
  an example is shown.

\begin{figure}[h]
\centering
\subfigure[$H(\sigma_{a}) = -5$]
{
\ifpdf\input{primero.pdf_t}\else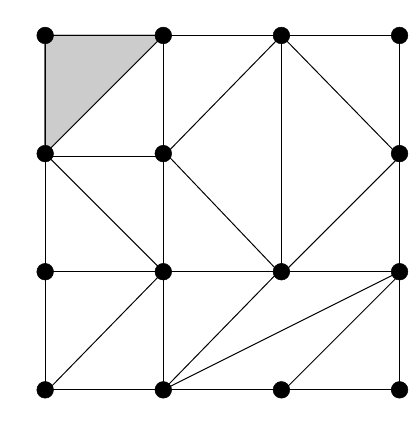\fi \label{primero}
}
\hspace{2em}
\subfigure[$H(\sigma_{b})=-6$]
  {\ifpdf\input{segundo.pdf_t}\else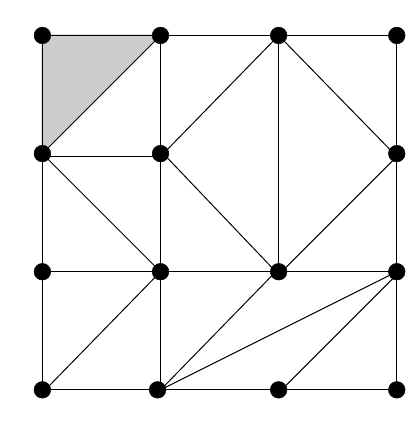\fi \label{segundo}
  }
\caption{Consider the depicted punctured toroidal triangulation $T$ (triangle in grey represents a hole).
	In both pictures (a) and (b) a satisfying spin-assignment on $T$ is sketched; $\sigma_{a}$
	and $\sigma_{b}$ respectively. However, 
	the energy of $\sigma_{a}$ is greater
	than energy of $\sigma_{b}$.}  
\label{punctured}
\end{figure}

Observe that given a (punctured) triangulation $T$, a spin-assignment 
  $\sigma$ on $T$ is satisfying if and only if $\mi\sigma$ is 
  also a satisfying spin-assignment on $T$.
We shall refer to this fact as \emph{sign symmetry}. 
We will use it in order to reduce the number of cases
  that need to be analysed in the proofs.  
Moreover, if $T$ admits exactly 
  two satisfying spin-assignments $\sigma$ and $\mi\sigma$, 
  we say that $T$ admits a unique pair of satisfying spin-assignments.
  

If $\sigma$ assigns the same spin 
  on all vertices of a subgraph $H$ of $T$
  (respectively all elements of $S\subseteq V(T)$), 
  we say that $H$ (respectively a subset $S$)
  is \emph{monochromatic} under $\sigma$.   
Similarly, we say that an edge is monochromatic (respectively 
  non-monochromatic) under $\sigma$ if~$\sigma$ assigns the same 
  (respectively distinct) spins on both ends of the edge. In other
  words, an edge is monochromatic under $\sigma$ if and only if it is frustrated by $\sigma$.
Monochromatic and non-monochromatic faces are 
  defined analogously depending on whether or 
  not its face boundary is 
  either monochromatic or non-monochromatic. 
Then, a spin-assignment $\sigma$ to $T$ 
  is satisfying if and only if every face of $T$ 
  is non-monochromatic under $\sigma$.
An edge $e$ in $E(T)$ will be called \emph{serious}
  if and only if $e$ is monochromatic under 
  every satisfying spin-assignment on $T$.

In what follows, serious edges are depicted as thicker lines 
  and holes of punctured triangulations are depicted as grey areas.

\subsection{Non-degenerated groundstates in toroidal triangulations} \label{sec:toroidal}

This section is devoted to prove Theorem~\ref{theo:onesat-toro} and to discuss some
   features of toroidal triangulations with a non-degenerated groundstate.
Here, we show a strategy to construct toroidal triangulations with
    a non-degenerated groundstate, where a groundstate is a satisfying spin-assignment. 
Nevertheless, we strongly believe there are many other ways to construct 
    them and also that the class of toroidal triangulations 
    with a non-degenerated groundstate is not small.
The proof of Theorem~\ref{theo:onesat-toro} is constructive and 
    it is based on a simple idea. However, it is far from being trivial 
    to find the concrete triangulations to make that idea work.

Next, we give definitions and an overview of the proof.  
Let $T$ be a toroidal triangulation that admits exactly 
  one pair of satisfying spin-assignments (recall that when a 
  satisfying spin-assignment exists, satisfying spin-assignments are identical to groundstates) 
  and $\sigma$ denote a satisfying spin-assignment on $T$.
Let $\tilde{F}$ be a non-empty set of faces of $T$.  
  We say that the pair $(T,\sigma)$ is \emph{invariant under removal} 
  of $\tilde{F}$ if the punctured triangulation $\tilde{T}$ 
  obtained by removing all faces contained in the set $\tilde{F}$,
  has exactly one pair of satisfying spin-assignments. In particular, 
  $\tilde{\sigma}$ is a satisfying spin-assignment 
  on $\tilde{T}$ if and only $\tilde{\sigma} \in \{\pl\sigma , \mi \sigma\}$.
If such a set of faces $\tilde{F}$ exists, we say that $T$ is a
  \emph{supporting triangulation}, $\tilde{F}$ will be called a 
  \emph{removable set of faces of} $T$ and $\tilde{T}$ will be referred to as
  the \emph{supporting punctured triangulation} associated to $T$ and $\tilde{F}$; 
  when $T$ and $\tilde{F}$ are clear (or implicit) from the context we 
  just say that $\tilde{T}$ is a supporting punctured triangulation. 
  Clearly, $\tilde{T}$ is an embedding of a graph in a torus with 
  $|\tilde{F}|$ triangular holes. 
Observe that the existence of a removable set of faces 
  is not trivial: one could remove from $T$ a non-empty set of faces in such a way 
  that the obtained triangulation has more satisfying spin-assignments than~$T$.

Let $T$ be a supporting triangulation, $\tilde{F}$ be a removable
  set of faces of $T$, $\sigma$ be a satisfying spin-assignment 
  on $T$ and $\tilde{T}$ be the supporting punctured triangulation associated
  to $T$ and $\tilde{F}$. The face boundaries of the faces 
  in $\tilde{F}$ (in other words, the $3$-cycles circumscribing
  the holes of $\tilde{T}$), will be referred to as the \emph{expandable cycles} 
  of $\tilde{T}$. The set of edges contained in the expandable cycles
  which are monochromatic under $\sigma$ will be called the 
  \emph{fundamental edges} of $\tilde{T}$. Notice that each
  expandable cycle contains exactly one fundamental edge, since  
  each expandable cycle is non-monochromatic under $\sigma$.
  Clearly, fundamental edges of $\tilde{T}$ are serious.

Assume that a supporting triangulation $T$ exists. Then,
  we know that a supporting punctured triangulation $\tilde{T}$ associated
  to $T$ admits exactly one pair of satisfying spin-assignments, say $\sigma$,
  $\mi \sigma$ and all expandable cycles of $\tilde{T}$ 
  are non-monochromatic under $\sigma$ with its fundamental edges 
  monochromatic under $\sigma$.
In our construction, each hole of $\tilde{T}$ (circumscribed by an expandable
  cycle) will be covered by a plane triangulation in such a way 
  that the number of vertices of the toroidal triangulation 
  increases and the number of satisfying spin-assignments keeps constant. 
To achieve this task, it is needed a plane triangulation $\Delta$ with an arbitrary number
  of vertices and satisfying the following condition: if $\{xyz\}$ is the 
  face boundary of the outer face of $\Delta$, then there is a unique pair of 
  satisfying spin-assignments on $\Delta$ such that at least 
  one edge from the set of edges $\{xy, yz, xz\}$ is monochromatic; 
  say $xy$.
We refer to $\Delta$ as an \emph{augmenting triangulation}
  and we say that edge $xy$ is a fundamental edge of $\Delta$.
An augmenting triangulation always exists, since 
  plane triangulations are duals of cubic bridgeless planar graphs
  and every perfect matching of a cubic bridgeless planar graph $G$ 
  corresponds to the set of monochromatic edges under a 
  satisfying spin-assignment on $G^*$. Then, 
  an augmenting triangulation is the dual of a 
  cubic bridgeless planar graph with a 
  specified edge contained in exactly one 
  perfect matching.
  
Next, we show that supporting triangulations exist.
In Subsection~\ref{augmenting} we will describe a family of 
  augmenting triangulations. Finally, in Subsection~\ref{subsec:prooftheotoro} we
   formalize the proof of Theorem~\ref{theo:onesat-toro}.

\subsubsection{Supporting triangulations} 

The minimal triangulations of a surface are those that
  have every edge in a \emph{noncontractible} 
  $3$-cycle.
A \emph{splitting} of a vertex $v$ replaces the vertex $v$ by two vertices
  $v_1$ and $v_2$ connected by a new edge $v_1v_2$, 
  and replaces each edge $vu$ incident to $v$ either by the edge $v_1u$ or by $v_2u$. 
It is well-known that every triangulation of a given surface $\Omega$,
  may be generated by a sequence of vertex-splittings 
  from a minimal triangulation of $\Omega$. In general, 
  the set of minimal triangulations is finite for 
  every fixed surface~\cite{ springerlink:10.1007/BF02767355, springerlink:10.1007/BF02764905}. In particular,
  there are 21 minimal toroidal triangulations (see~\cite[\S 5.4]{MT}).

It is natural and potentially useful to look for supporting 
  triangulations in the set of minimal toroidal triangulations. 
On one hand its reduced number of vertices allow to verify 
  the required properties manually, on the other hand, 
  it strongly indicates a possible way to describe the 
  whole set of toroidal triangulations with a 
  non-degenerated groundstate since every toroidal
  triangulation may be generated from a minimal one. 
\begin{figure}[h]
\centering
{
\ifpdf\input{supp_01.pdf_t}\else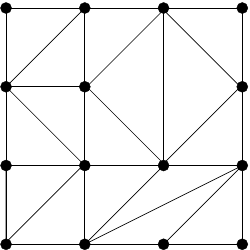\fi
}
\hspace{2em}
  {\ifpdf\input{supp_02.pdf_t}\else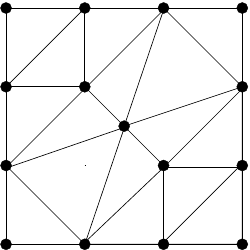\fi
  }
\hspace{2em}
{
\ifpdf\input{supp_03.pdf_t}\else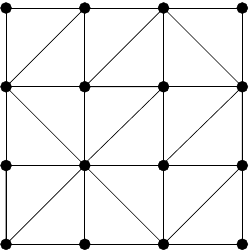\fi
}
\hspace{2em}
  {\ifpdf\input{supp_04.pdf_t}\else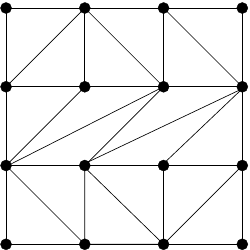\fi
  }
\vspace{2em}
{
\ifpdf\input{supp_01_removed.pdf_t}\else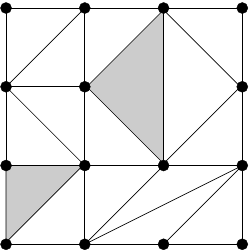\fi
  \label{fig:supp_removed}
}
\hspace{2em}
  {\ifpdf\input{supp_02_removed.pdf_t}\else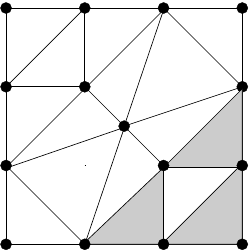\fi
  }
\hspace{2em}
{
\ifpdf\input{supp_03_removed.pdf_t}\else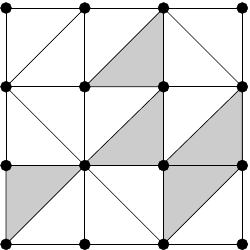\fi
}
\hspace{2em}
  {\ifpdf\input{supp_04_removed.pdf_t}\else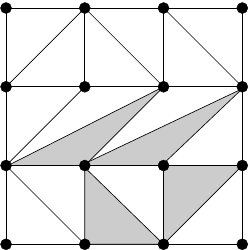\fi
  }
\caption{In the first row, minimal triangulation of the torus which are supporting.
Below each supporting triangulation an associated supporting punctured triangulation 
  is depicted (regions in grey represent the holes generated by the removal
  of a removable set of faces of each supporting triangulation).}
\label{fig:supptriang}
\end{figure}

We exhibit the subset of minimal triangulations 
  of the torus which have the property of being a supporting
  triangulation (see Figure~\ref{fig:supptriang}). However, 
  some minimal triangulations of the torus which are not supporting can 
  become supporting after some slight modifications, 
  namely, after	edge-flippings and vertex-splittings. 
Nevertheless, this analysis is not the aim of 
  this work and will be studied separately. 

In the first row of Figure~\ref{fig:supptriang}, 
  all minimal toroidal triangulations which are supporting 
  triangulations are depicted  (in each picture
  opposite sides have to be identified). In the second 
  row of Figure~\ref{fig:supptriang} are depicted the supporting
  punctured triangulations obtained from the removal of 
  a set of removable faces of each supporting triangulation above.
We will formally prove supportability only for one of the 
  triangulation depicted in Figure~\ref{fig:supptriang}. 
  The same proof ideas can be easily applied for the remaining cases.

\begin{proposition}\label{prop:supptriang}
Let $T$ be the toroidal triangulation depicted in Figure~\ref{fig:noholess} 
and $\tilde{F}$ be the subset of faces of $T$ 
  with face boundary the cycles $\{uvw\}$ and $\{\tilde{u}\tilde{v}\tilde{w}\}$. 
Then, triangulation $T$ is a supporting triangulation and
$\tilde{F}$ is a removable set of faces of $T$.
\end{proposition}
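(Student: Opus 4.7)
The plan is to argue by a propagation-style case analysis that exploits sign symmetry throughout. The argument splits naturally into two halves: first verifying that the unpunctured triangulation $T$ admits a unique pair of satisfying spin-assignments $\pm\sigma$, and second verifying that this uniqueness persists after the two triangles in $\tilde{F}$ are removed.

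For the first half, I fix by sign symmetry $\sigma_u = \pl 1$ for a suitably chosen vertex, and then sweep through the faces of $T$ in an order adapted to its local structure. Since minimal toroidal triangulations have only a small number of vertices, the propagation terminates quickly. Whenever a face has two of its three vertices already labelled, the non-monochromaticity condition (exactly one monochromatic edge per face) either forces a unique spin at the third vertex or produces a small finite branching; I enumerate those branches and rule out all but one by checking the remaining face constraints, obtaining uniqueness of $\sigma$ up to sign.

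For the second half, let $\tau$ be any satisfying spin-assignment on $\tilde{T}$. After fixing $\tau$ at one vertex by sign symmetry, all face constraints of $T$ \emph{except} those coming from $\{uvw\}$ and $\{\tilde{u}\tilde{v}\tilde{w}\}$ are still in force on $\tilde{T}$. The goal is to show that these remaining constraints already force $\tau = \sigma$. I would replay the forcing argument from the first half, replacing any step that previously used $\{uvw\}$ or $\{\tilde{u}\tilde{v}\tilde{w}\}$ by a detour through a neighbouring face of $\tilde{T}$. Each hole is surrounded by several triangles in $\tilde{T}$, and the constraints coming from those triangles collectively carry enough information to substitute for the missing local constraint. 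Once $\tau$ is pinned to $\sigma$ on all vertices, one reads off that the fundamental edges on the expandable cycles are indeed monochromatic under $\sigma$, as claimed.

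The main obstacle is exactly this second half: removing faces strictly weakens the constraint system, so it is not automatic that the set of satisfying assignments keeps the same size. One must verify, for the specific triangulation of Figure~\ref{fig:noholess} and the specific choice of $\tilde{F}$, that the combinatorial environment around each hole is rich enough to detect any deviation from $\sigma$ on a proper subset of vertices. This amounts to a small but non-trivial manual check driven by how the two removed triangles sit inside $T$; once carried out, essentially the same reasoning handles the three other supporting triangulations in Figure~\ref{fig:supptriang}.
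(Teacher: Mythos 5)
Your proposal matches the paper's strategy in spirit---sign symmetry plus local propagation / finite case analysis---but you structure it in two sweeps where the paper uses one, and in both of your sweeps the decisive finite verification is described but not actually carried out, which is the entire content of this proposition.

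The paper argues directly on the punctured triangulation $\tilde{T}$ and never separately verifies uniqueness on $T$: it observes that it is enough to show (i) $\tilde{T}$ admits exactly one pair of satisfying spin-assignments $\pm\sigma$, and (ii) both expandable cycles are non-monochromatic under $\sigma$. Given (i) and (ii), $\sigma$ automatically satisfies the two extra face constraints of $T$, and any satisfying state on $T$ restricts to one on $\tilde{T}$, so $T$ inherits exactly the pair $\pm\sigma$; your ``first half'' is therefore redundant. For (i) the paper fixes the full face boundary $\{u v \tilde{w}\}$ (a face that survives in $\tilde{T}$), which by sign symmetry leaves exactly three non-monochromatic spin patterns, and then propagates forced spins face by face, killing two of the three patterns by producing a monochromatic face and resolving the last pattern by a single further branch on vertex $x$---one sub-branch dies, one extends uniquely. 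By contrast, you fix only a single vertex's spin, which by itself does not start the propagation and postpones the branching to an unspecified place in the sweep; and your ``detour through a neighbouring face'' remark for the second half is exactly the delicate point, for which the proposition needs an explicit check rather than a plausibility argument. In short: right method, a wasted first pass, and the crucial finite verification is gestured at rather than performed.
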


\begin{figure}[h]
\centering
\subfigure[Toroidal triangulation.]
{
\ifpdf\input{nohole_oness.pdf_t}\else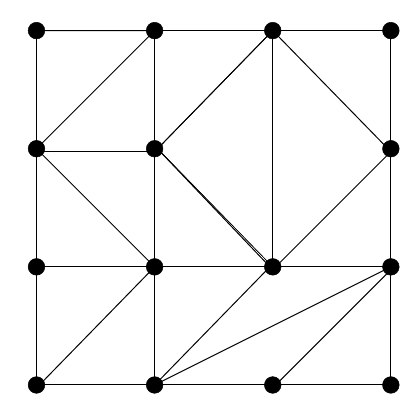\fi
  \label{fig:noholess}
}
\hspace{2em}
\subfigure[Punctured toroidal triangulation.]
  {\ifpdf\input{twohole_oness.pdf_t}\else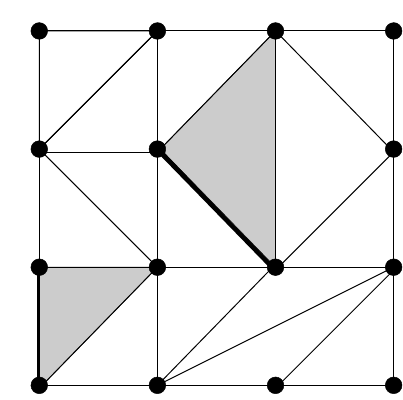\fi
  \label{fig:twoholess}
  }
\caption{The supporting triangulation and punctured triangulation of Proposition~\ref{prop:supptriang}.}
\end{figure}

\begin{proof}
Let $\tilde{T}$ be the punctured triangulation obtained by removing from $T$ all faces 
  contained in $\tilde{F}$ (see Figure~\ref{fig:twoholess}).
To see that $T$ is a supporting triangulation and $\tilde{F}$ is a
removable set of faces of $T$, it is enough to prove 
that $\tilde{T}$ has exactly one pair of satisfying spin-assignments and 
that both expandable cycles of $\tilde{T}$ are non-monochromatic under 
a satisfying spin-assignment~on~$\tilde{T}$.

To do that, by sign symmetry it suffices to verify that
  exactly one of the following three initial configurations can be 
  extended to a satisfying spin-assignment on $\tilde{T}$ (vertex labels as in Fi\-gure~\ref{fig:twoholess}):
  [1] when the cycle $\{uv\tilde{w}\}$ is assigned spin $\texttt{++-}$, 
  [2] when the cycle $\{uv\tilde{w}\}$ is assigned spin $\texttt{+-+}$, 
  [3] when the cycle $\{uv\tilde{w}\}$ is assigned spin $\texttt{+--}$. 
Then, it is necessary to check that both expandable 
  cycles are non-monochromatic under such satisfying spin-assignment.

In Figures~\ref{fig:nomono_no1} and \ref{fig:nomono_no2} the case when $uv$ is non-monochromatic
  is worked out; a subindex $i$  accompanying a $\pl$ or $\mi$ sign indicates that the spin is
  forced by the spin-assignments with smaller indices in order
  for the assignment to be satisfiable --- if spins assigned
  on the vertices of a face boundary are forced to be all of the same
  sign, then no satisfying assignment can exist under the given
  initial conditions.  This establishes that when the cycle 
  $\{uv\tilde{w}\}$ is assigned $\texttt{+-+}$ or 
  $\texttt{+--}$, there is no satisfying spin-assignment extension on $\tilde{T}$.
\begin{figure}[h]
\centering
\subfigure[Assignment forced by fixing $uv\tilde{w}$ to $\texttt{+-+}$.]
{
\ifpdf\input{conector-a.pdf_t}\else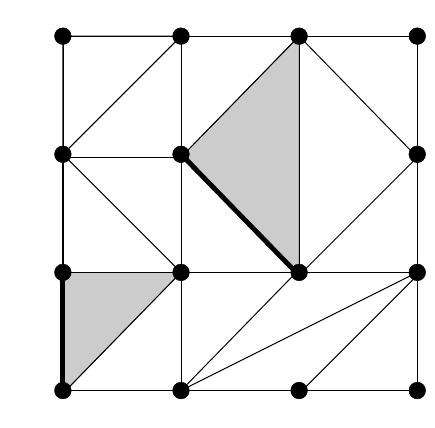\fi
  \label{fig:nomono_no1}
}
\hspace{2em}
\subfigure[Assignment forced by fixing $uv\tilde{w}$ to $\texttt{+--}$.]
  {\ifpdf\input{conector-b.pdf_t}\else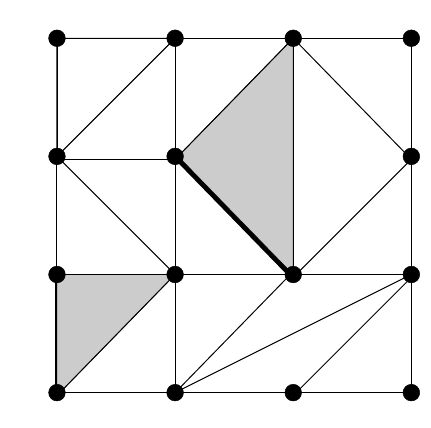\fi
  \label{fig:nomono_no2}
  }
\caption{Case when $uv$ is non-monochromatic. Forced monochromatic faces are labelled by~$\rightarrow\leftarrow$.}
\end{figure}

In Figure~\ref{fig:monos_supp}, the case when edge $uv$ is monochromatic
  is studied. In this situation, two subcases arise, 
  depending on whether or not the spin $\pl1$
  is assigned to vertex $x$ (vertex labels as in Figure~\ref{fig:twoholess}) 
  --- each subcase is dealt in the same way as the 
  previous situation and worked out separately in Figures~\ref{fig:mono_no} 
  and~\ref{fig:mono_si}. This shows that there exist a unique 
  satisfying spin-assignment on $\tilde{T}$
. 
\begin{figure}[h]
\centering
\subfigure[Sub-case when $\texttt{+}$ is pres\-cribed to vertex $x$. Initial 
spin-assignment forces a monochromatic triangle, labelled by $\rightarrow\leftarrow$.]
{\ifpdf\input{conector-c.pdf_t}\else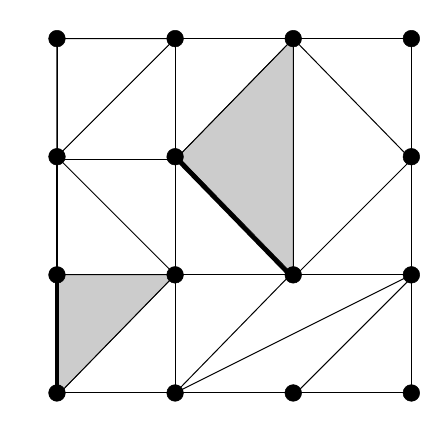\fi
  \label{fig:mono_no}
}
\hspace{2em}
\subfigure[Sub-case when $\texttt{-}$ is pres\-cribed to vertex $x$.
 Initial 
spin-assignment can be extended to a unique satisfying spin-assignment on $\tilde{T}$.]
  {\ifpdf\input{conector-d.pdf_t}\else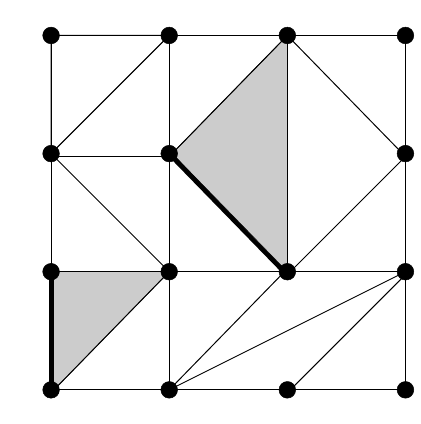\fi
  \label{fig:mono_si}
  }
\caption{The case when $\{uv\tilde{w}\}$ is prescribed spin $\texttt{++-}$.}
\label{fig:monos_supp}
\end{figure}

Finally, notice that both expandable cycles of $\tilde{T}$ are non-monochromatic under 
  the unique satisfying spin-assignment on $\tilde{T}$ (depicted in Figure~\ref{fig:mono_si}). 
\end{proof}

\subsubsection{Augmenting triangulations}\label{augmenting}

We already mentioned that an augmenting triangulation always exists.
In this subsection we just show an easy  
  way to construct a vertex-increasing family of such triangulations. 

Let $\Delta_0=\{x,y_0,z\}$ be a plane triangle. 
For $i \geq 1$, let $\Delta_{i}$ be the plane triangulation obtained 
  by applying the following rule to~$\Delta_{i-1}$:
  (1) insert a new vertex $y_i$ in the outer face of $\Delta_{i-1}$, and
  (2) connect the new vertex $y_i$ to 
      each vertex in the outer face of~$\Delta_{i-1}$ 
      so that the outer face of the new plane triangulation 
      has face boundary $\{xy_iz\}$ (See Figure~\ref{fig:stack}).
Clearly, the number of vertices of $\Delta_{n}$ is $n+3$.
We will see that every triangulation from the collection 
  $\{\Delta_{j}\}_{j>0}$ is an augmenting triangulation
  with fundamental edge $xy_j$ . 
This type of triangulations belongs to the set of 
\emph{stack triangulations}~(see~\cite{JK}) and more
  families of augmenting triangulations can be easily found in that set.

\begin{figure}[h]
\centering
\ifpdf\input{stack.pdf_t}\else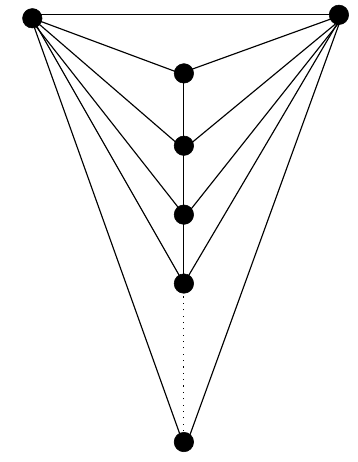\fi
\caption{Construction of $\Delta_n$.}
\label{fig:stack}
\end{figure}

\begin{theorem}
Let $n>0$ and $\{xy_nz\}$ the face boundary of the outer face of 
  $\Delta_n$ (see Figure~\ref{fig:stack}). There exist a unique pair of 
 satisfying spin-assignments on $\Delta_n$ so that edge $xy_n$ is monochromatic.
\end{theorem}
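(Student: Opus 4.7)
The plan is induction on $n$, taking advantage of the fact that adjoining $y_n$ to $\Delta_{n-1}$ creates only three new faces around the new vertex, each of which imposes a tight local constraint on any satisfying spin-assignment of $\Delta_n$ that makes $xy_n$ monochromatic. The base case $n=0$ is immediate: $\Delta_0$ consists of the single face $\{xy_0z\}$, and fixing $xy_0$ monochromatic forces $\sigma(x)=\sigma(y_0)=\mi\sigma(z)$, so exactly one pair of spin-assignments occurs.

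For the inductive step I would take a satisfying spin-assignment $\sigma$ on $\Delta_n$ with $xy_n$ monochromatic and, by sign symmetry, assume $\sigma(x)=\sigma(y_n)=\pl 1$. The outer face $\{xy_nz\}$ then forces $\sigma(z)=\mi 1$, while the face $\{xy_{n-1}y_n\}$ already carries two $\pl$'s and so forces $\sigma(y_{n-1})=\mi 1$; the remaining new face $\{y_{n-1}zy_n\}$ is then automatically non-monochromatic. In particular the $3$-cycle $\{xy_{n-1}z\}$, which is the outer face of $\Delta_{n-1}$, is non-monochromatic under $\sigma$ with $y_{n-1}z$ as its unique monochromatic edge; since every other face of $\Delta_{n-1}$ is still a face of $\Delta_n$, the restriction of $\sigma$ to $\Delta_{n-1}$ is a satisfying spin-assignment on $\Delta_{n-1}$ whose outer-face monochromatic edge is $y_{n-1}z$. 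Conversely, any such assignment on $\Delta_{n-1}$ extends uniquely back to $\Delta_n$ by setting $\sigma(y_n)=\pl 1$, producing a bijection between the two sets of spin-assignments being counted.

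To finish, I would invoke the $x \leftrightarrow z$ symmetry of the construction of $\Delta_{n-1}$, which provides a bijection between satisfying spin-assignments on $\Delta_{n-1}$ with $y_{n-1}z$ monochromatic and those with $xy_{n-1}$ monochromatic; by the induction hypothesis the latter set has exactly two elements, so the same holds for $\Delta_n$ with $xy_n$ monochromatic. The main delicate point is the forward direction of the inductive step: one must verify that the hypothesis ``$xy_n$ monochromatic'' really does propagate to $y_{n-1}z$ being the unique monochromatic edge on the outer face of $\Delta_{n-1}$ (and not to $xz$, a case that would not reduce to the inductive hypothesis and would break the recursion), so that the $x$-$z$ symmetry cleanly closes the argument.
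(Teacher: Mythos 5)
Your proof is correct and follows essentially the same inductive strategy as the paper: fix the outer face non-monochromatic with $xy_n$ the monochromatic edge, observe that the two new internal faces force $\sigma(y_{n-1})$, and reduce to the outer face of $\Delta_{n-1}$. You are, if anything, more careful than the paper at the one delicate point: the forced assignment on $\{x,y_{n-1},z\}$ is $\texttt{+--}$ rather than $\texttt{++-}$, so the induction hypothesis as literally stated does not apply until one invokes the $x\leftrightarrow z$ automorphism (or, equivalently, alternates between the two cases), a step the paper leaves implicit but you make explicit.
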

\begin{proof}
By sign symmetry, it suffices to prove that if spin  $\texttt{++-}$ is
    prescribed to $\{xy_nz\}$, then there
    is a unique satisfying spin-assignment extension on $\Delta_n$.
We will proceed by induction on~$n$. If $n=1$ it is trivial 
    to check that the result holds.
    Let $n>1$. If we prescribe spin $\texttt{++-}$ to $\{xy_{n}z\}$, then 
    in order to have a satisfying spin-assignment on $\Delta_n$
    the vertex $y_{n-1}$ is forced to have spin  $\texttt{-}$. 
    It implies that in any extension to a satisfying spin-assignment on $\Delta_n$, 
    the $3$-cycle $\{x,y_{n-1},z\}$ has spin $\texttt{+--}$. Then, by induction
    hypothesis uniqueness holds.  
\end{proof}

\subsubsection{Proof of Theorem~\ref{theo:onesat-toro}} \label{subsec:prooftheotoro}

Let $T$ be a supporting triangulation, $\tilde{F}$ be a set of removable
 faces of $T$ and $\tilde{T}$ be the supporting punctured triangulation 
 associated to $T$ and $\tilde{F}$. Consider $|\tilde{F}|=t$ and let 
 $\{\Delta^{i}\}_{i \in [t]}$ by a collection of augmenting triangulations. 
Let $\calC_1, \ldots, \calC_t$ denote the expandable cycles of $\tilde{T}$

Let $\mathcal{T}$ denote the triangulation obtained by gluing together the
    supporting punctured triangulation $\tilde{T}$ and the collection
    of augmenting triangulations $\{\Delta^{i}\}_{i \in [t]}$ in the following way: take 
    the collection of augmenting triangulations 
    $\{\Delta^{i}\}_{i \in [t]}$ and
    for each $i \in [t]$, identify the boundary face of 
    the outer face of $\Delta^{i}$ with the expandable cycle 
    $\calC_i$ of $\tilde{T}$ in such a way 
    that the fundamental edge the augmenting triangulation 
    coincides with the fundamental edge of $\calC_i$.

It follows from the construction that the toroidal triangulation $\mathcal{T}$ has a unique pair of groundstates.
This finishes the proof of Theorem \ref{theo:onesat-toro}.

\subsection{Proof of Theorem~\ref{theo:onesat}} \label{sec:generalsurface}

The proof of Theorem~\ref{theo:onesat} is relatively straightforward
  from the construction made for proving Theorem~\ref{theo:onesat-toro}.
Unlike the case of triangulations of the torus, 
 in Theorem~\ref{theo:onesat} the genus of the closed Riemann surface may be arbitrarily large.
However, the described supporting punctured triangulations can
  perform the task of increasing genus and at the same time keeping
  all properties of existence and uniqueness of satisfying
  spin-assignments in such a way that the same
  construction made in Subsection~\ref{subsec:prooftheotoro} works. 
  Next, we add some new definitions
  and show how to deal with the construction for proving Theorem~\ref{theo:onesat}.

Let $T$ be a supporting triangulation and $\tilde{F}$ be a set of removable
 faces of $T$. If $\tilde{F}$ contains at least two faces $f_1$, $f_2$ such that
 its face boundaries $\calC_1$, $\calC_2$ don't share any vertex, we say that 
 the supporting punctured triangulation $\tilde{T}$ associated to $T$ and $\tilde{F}$ 
 is a \emph{connector} and that $\calC_1$ and $\calC_2$ are its \emph{connection cycles}.
Clearly, a connector exists. Indeed, the supporting punctured triangulation depicted
 in Figure~\ref{fig:twoholess} is a connector. Properties and names from 
  supporting punctured triangulations are transferred
  to connectors.

We are ready to describe the construction. Let $g$ be a integer positive number, 
  $\{\tilde{T}_{i}\}_{i \in [g]}$ be a collection of connectors and $t_i+2$ be the number of 
  expandable cycles of connector $\tilde{T}_{i}$ for each~$i \in [g]$ (clearly, $t_i\geq0$ for all $i$). 
Moreover, let $\calC_{2i-1}$, $\calC_{2i}$ denote the connection cycles of $\tilde{T}_{i}$
for each $i \in [g]$. 

Let $\mathcal{\tilde{T}}$ denote the punctured triangulation 
    of a surface of genus $g$ obtained by gluing together 
    the collection of connectors $\{\tilde{T}_{i}\}_{i \in [g]}$
    in the following way: for each $j\in[g-1]$ identify the connection cycle
    $\calC_{2j}$ of $\tilde{T}_{j}$ with the connection cycle $\calC_{2(j+1)-1}$
    of $\tilde{T}_{j+1}$ in such a way that the fundamental edge of $\calC_{2j}$ 
    coincides with the fundamental edge of $\calC_{2(j+1)-1}$.

It is routine to check that $\mathcal{\tilde{T}}$
  has exactly one pair of satisfying spin-assignments and that each hole
  of $\mathcal{\tilde{T}}$ is non-monochromatic under any satisfying state on $\mathcal{\tilde{T}}$.

To conclude, the Theorem~\ref{theo:onesat} follow directly from applying the same 
  construction presented in Subsection~\ref{subsec:prooftheotoro}, using
  $\mathcal{\tilde{T}}$ instead of a supporting punctured triangulation.

\section{Final Comments}

The strategy to construct triangulations presented in this notes 
  may be (easily) extended to construct triangulations of a fixed surface 
  with $n$ vertices and groundstate degeneracy $f(n)$, where $f(n)$
  is a function depending on $n$ which can be either constant or polynomial on $n$;
  it can be reached by taking instead of an augmenting triangulation, a plane
  triangulation with the required property (which always exists --- see for example~\cite{JK}).

We believe that this work leaves many doors open and unanswered questions.
First, we think that it is of particular relevance to find a 
  complete description of triangulations 
  with a non-degenerated groundstate. 
Also, in this context, we strongly believe that it is of great interest 
  to study the following question: what is the groundstate 
  degeneracy of random triangulations 
  provided with the antiferromagnetic Ising model?. 
  This will help to understand the behaviour of geometrically frustrated systems.

On the other hand, the problem of spin glasses has attracted 
  considerable attention over recent years. 
  Both, in solid physics and in statistical physics (for instance see~\cite{1996JPhA...29.3939R}). 
  In the Ising spin glass model, coupling constants are randomly distributed. 
  Usually, each coupling constant is set randomly to 
  either $\pl1$ or $\mi1$ with equal probability. The case of the antiferromagnetic
  Ising model is the critical case when the coupling constant 
  is set to $\mi1$ with probability equal to one.
In this context, next goal would be to study the Ising model 
  on toroidal triangulations with this probability less than one and very close
  to one.

\section*{Acknowledgements}
The author thanks Marcos Kiwi and Martin Loebl for their enthusiasm, motivation, and 
  many helpful discussions.

\end{document}